\documentclass[reqno,a4paper]{amsart}
\usepackage{rotating}
\usepackage{amsthm}
\usepackage{mathdots}
\usepackage{graphicx}
\usepackage{amsmath}
\usepackage{mathrsfs}
\usepackage{amsfonts}
\usepackage{amssymb}
\usepackage{enumerate}
\usepackage{latexsym}
\usepackage{graphpap}
\usepackage{cite}
\usepackage{tikz}
\usepackage{bm}
\usepackage{multirow}
\usepackage{makecell}
\usepackage{blkarray}
\usepackage{amsmath}
\usepackage{caption}
\newtheorem{theorem}{Theorem}[section]

\newtheorem{lemma}[theorem]{Lemma}
\newtheorem{proposition}[theorem]{Proposition}

\theoremstyle{theorem}

\newtheorem{remark}[theorem]{Remark}

\theoremstyle{remark}
\numberwithin{equation}{section}

\newcommand{\dom}{\rm{dom}\,}
\newcommand{\im}{\rm{im}\,}

\begin{document}

\title[]{Probabilistic results for monoids of order-preserving transformations}

\thanks{This research was partially supported by the National Natural Science Foundation of China (Nos. 12271224, 12571018, 12401027), the Fundamental Research Funds for the Central University (No. lzujbky-2023-ey06) and Gansu Provincial Department of Education: Innovation Star Project for Graduate Students in Universities in Gansu Province (No. 2026CXZX-151)}

\author[Y. An]{Yang An}
\author[W. T. Zhang]{Wen Ting Zhang$^{\star}$}\thanks{$^\star$ Corresponding author} %
\address{$^{1}$ School of Mathematics and Statistics, Lanzhou University, Lanzhou, Gansu 730000, PR China} %
\email{any2024@lzu.edu.cn; zhangwt@lzu.edu.cn}

\subjclass[2020]{20M20, 60E05}

\keywords{partial transformation, full transformation, hypergeometric distribution, expectation, variance}

\begin{abstract}
Let $\mathcal{PO}_n$ be the monoid of all order-preserving partial transformations on $X_n=\{1,\dots, n\}$ with the natural order, and let $\mathcal{O}_n$ and $\mathcal{POI}_n$ denote its submonoids of order-preserving full and injective partial transformations, respectively. For each transformation $\alpha\in\mathcal{PO}_n$, write the random variables $Y(\alpha)=|{\im}\alpha|$ and $Y_r(\alpha)=|{\im}\alpha|$ given that $|{\dom}\alpha|=r$ for $0 \leqslant r \leqslant n$. We determine the probability distribution, expectation and variance of $Y_r$ and $Y$ for $\mathcal{PO}_n$ and $\mathcal{POI}_n$. In particular, $Y_r(\alpha)$ follows a hypergeometric distribution $H(n+r-1,n,r)$ for $\alpha \in \mathcal{PO}_n$, while $Y_r(\alpha)$ is degenerate and $Y(\alpha)$ follows a hypergeometric distribution $H(2n,n,n)$ for $\alpha \in \mathcal{POI}_n$.
\end{abstract}
\maketitle

\section{Introduction}
Let $X_n=\{1,\ldots,n\}$ be equipped with the natural order. Denote by $\mathcal{PT}_n$ the monoid of all partial transformations on $X_n$. A transformation $\alpha \in \mathcal{PT}_n$ is called \textit{order-preserving} if, for all $x, y \in \dom\alpha$, $x \leqslant y$ implies $x \alpha \leqslant y \alpha$. Let $\mathcal{PO}_n \subseteq \mathcal{PT}_n$ be the submonoid of order-preserving partial transformations. Its submonoids $\mathcal{O}_n$ and $\mathcal{POI}_n$ consist of order-preserving full transformations and of injective order-preserving partial transformations, respectively.

Combinatorial and probabilistic properties of transformation semigroups have been studied extensively, yielding many interesting results. For each $\alpha\in\mathcal{PO}_n$, define the random variables $Y(\alpha)=|{\im}\alpha|$ and $Y_r(\alpha)=|{\im}\alpha|$ given that $|{\dom}\alpha|=r$ for $0 \leqslant r \leqslant n$. Higgins explored the expectation and variance of $Y_n$ for the monoid $\mathcal{O}_n$ and proved that $ E(Y_n)=\frac{n^2}{2n-1}$, $\sigma^2(Y_n)=\frac{(n-1)n^2}{2(2n-1)^2}$ \cite{higgins1993}. Howie showed the combinatorial properties of $\mathcal{O}_n$ and determined that $|\mathcal{O}_n|=\binom{2n-1}{n-1}$ \cite{howie1971}. Subsequently, Gomes and Howie proved that $|\mathcal{PO}_n|=\sum_{r=0}^n\binom{n}{r}\binom{n+r-1}{r}$, the rank of $\mathcal{O}_n$ is $n$ and the rank of $\mathcal{PO}_n$ is $2n-1$ for $n\geqslant2$ \cite{howie1992}. Further, Laradji and Umar computed the cardinalities of some equivalence classes in $\mathcal{O}_n$ \cite{umar2006} and Umar studied the cardinalities of some equivalences on $\mathcal{PO}_n$ \cite{umar2014}. For the injective case, Garba studied the combinatorial properties of $\mathcal{POI}_n$ and showed that $|\mathcal{POI}_n|=\binom{2n}{n}$ \cite{garba1994}. Additionally, Laradji and Umar gave the formulae for the number of transformations $\alpha\in\mathcal{POI}_n$ having exactly $k$ fixed points \cite{umar2007}. In \cite{umar2010}, Umar collected various enumeration results and highlighted open problems for $\mathcal{POI}_n$.

In this paper, we study the probabilistic properties of $\mathcal{PO}_n$ and $\mathcal{POI}_n$. Note that if $\alpha\in\mathcal{PO}_n$ is a null transformation, $Y_r(\alpha)$ degenerates at $0$. Hence we always assume that $\alpha$ is not a null transformation in this paper. In Section \ref{Section2}, for each $\alpha\in\mathcal{PO}_n$, we determine the probability distribution, expectation and variance of $Y_r(\alpha)$ and $Y(\alpha)$, respectively. By these results, the probability distribution, expectation and variance of $Y_n(\alpha)$ and $Y(\alpha)$ for each $\alpha\in\mathcal{O}_n$ can be directly deduced. In Section \ref{Section3}, for each $\alpha\in\mathcal{POI}_n$, we prove that $Y_r(\alpha)$ is degenerate with $E(Y_r)=r$ and $\sigma^2(Y_r)=0$, and $Y(\alpha)$ follows a hypergeometric distribution $H(2n,n,n)$ with $E(Y)=\frac{n}{2}$ and $\sigma^2(Y)=\frac{n^2}{4(2n-1)}$.

Refer to the monograph by Howie \cite{howie1995} for any undefined notation and terminology of semigroup theory. We list some known combinatorial and probabilistic results that we need later.

\begin{proposition} For natural numbers $a, b, k, r$ and $n$, the following identities hold
\begin{align}
\sum_{k=0}^r\binom{a}{k}\binom{b}{r-k}
&=\binom{a+b}{r},\label{1}\\
\sum_{k=1}^r\binom{n}{k}\binom{r-1}{k-1}
&=\binom{n+r-1}{r},\label{2}\\
\sum_{k=1}^r k \binom{n}{k}\binom{r-1}{k-1}
&=n \binom{n+r-2}{r-1},\label{3}\\
\sum_{k=1}^r k^2 \binom{n}{k} \binom{r-1}{k-1}
&=n(n-1)\binom{n+r-3}{r-2}+n\binom{n+r-2}{r-1}.\label{4}
\end{align}
\end{proposition}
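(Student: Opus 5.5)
The plan is to take \eqref{1}, Vandermonde's convolution, as the base identity and reduce \eqref{2}--\eqref{4} to it by rewriting the factors $k\binom{n}{k}$ and $k^2\binom{n}{k}$. For \eqref{1} itself I will use a double-counting argument: choosing $r$ elements from a disjoint union of an $a$-set and a $b$-set is the same as choosing $k$ from the first and $r-k$ from the second, summed over $k$. Equivalently, one can compare coefficients of $x^r$ in $(1+x)^a(1+x)^b=(1+x)^{a+b}$. Terms with $k>a$ or $r-k>b$ vanish, so the range $0\le k\le r$ causes no trouble.

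For \eqref{2}, I will use the symmetry $\binom{r-1}{k-1}=\binom{r-1}{r-k}$. The sum then becomes $\sum_{k}\binom{n}{k}\binom{r-1}{r-k}$, and \eqref{1} with $a=n$, $b=r-1$ gives $\binom{n+r-1}{r}$. The $k=0$ term is zero because $\binom{r-1}{r}=0$ for $r\ge1$, so including it is harmless; I will assume $r\ge1$ throughout.

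For \eqref{3}, I will apply the absorption identity $k\binom{n}{k}=n\binom{n-1}{k-1}$. The sum becomes $n\sum_{k=1}^r\binom{n-1}{k-1}\binom{r-1}{r-k}$. Substituting $j=k-1$ turns this into $n\sum_j\binom{n-1}{j}\binom{r-1}{r-1-j}$, and \eqref{1} then yields $n\binom{n+r-2}{r-1}$.

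For \eqref{4}, I will write $k^2=k(k-1)+k$. The $k$ part is exactly \eqref{3}. For the other part I use $k(k-1)\binom{n}{k}=n(n-1)\binom{n-2}{k-2}$, which gives $n(n-1)\sum_k\binom{n-2}{k-2}\binom{r-1}{r-k}$. Setting $j=k-2$ and applying \eqref{1} with $a=n-2$, $b=r-1$ and total $r-2$ yields $n(n-1)\binom{n+r-3}{r-2}$. Adding the two parts gives \eqref{4}.

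The main obstacle is bookkeeping at the edges rather than the idea itself. I need to check that the summation ranges match those in \eqref{1} once vanishing terms are accounted for. I also need to handle small cases such as $r=1$, where $\binom{n+r-3}{r-2}$ must be read as $0$, and $n<2$. These are settled by using the convention $\binom{m}{j}=0$ for $j<0$ or $j>m\ge0$, and by noting that the prefactor $n(n-1)$ vanishes in the degenerate cases.
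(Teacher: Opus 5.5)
Your proposal is correct and follows essentially the same route as the paper: Vandermonde's convolution for \eqref{1}, then the absorption identity $k\binom{n}{k}=n\binom{n-1}{k-1}$ and the split $k^2=k(k-1)+k$ to reduce \eqref{2}--\eqref{4} to \eqref{1}. You are more explicit than the paper about the symmetry $\binom{r-1}{k-1}=\binom{r-1}{r-k}$ and about the second absorption $k(k-1)\binom{n}{k}=n(n-1)\binom{n-2}{k-2}$, which the paper covers only with ``by \eqref{3}''; your restriction to $r\ge 1$ is genuinely needed, since \eqref{2} fails at $r=0$.
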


\begin{proof}
The identity (\ref{1}) is Vandermonde's Convolution Identity \cite{riordan1968}. The identities (\ref{2}), (\ref{3}) and (\ref{4}) follow from
\[
\sum_{k=1}^r\binom{n}{k}\binom{r-1}{k-1}\;=\;\sum_{k=0}^r\binom{n}{k}\binom{r-1}{k-1} \;\stackrel{(\ref{1})}{=}\;\binom{n+r-1}{r},
\]
\begin{align*}
\sum_{k=1}^r k \binom{n}{k}\binom{r-1}{k-1}
&\;= \;\sum_{k=1}^{r} n \binom{n-1}{k-1}\binom{r-1}{k-1} \;=\; n \sum_{k=0}^{r-1}\binom{n-1}{k}\binom{r-1}{k} \\
&\; \stackrel{(\ref{1})}{=} \; n \binom{n+r-2}{r-1},
\end{align*}
\begin{align*}
\sum_{k=1}^r k^2 \binom{n}{k} \binom{r-1}{k-1}
&\;=\; \sum_{k=1}^r k(k-1) \binom{n}{k} \binom{r-1}{k-1}+\sum_{k=1}^r k \binom{n}{k} \binom{r-1}{k-1} \\
&\;\stackrel{(\ref{3})}{=}\; n(n-1)\binom{n+r-3}{r-2}+n\binom{n+r-2}{r-1}.
\end{align*}
\end{proof}

\begin{proposition}{\rm\cite{kolmogorov2018}} For natural numbers $k, r$,
\begin{enumerate}[(i)]\label{probability}
\item The probability distribution $P(Y=k)=\sum_{r} P(Y=k|Z=r) \cdot P(Z=r)$;
\item The expectation $E(Y)=\sum_{r} E(Y|Z=r) \cdot P(Z=r)$;
\item The variance $\sigma^2(Y)=E(Y^2)-(E(Y))^2$.
\end{enumerate}
\end{proposition}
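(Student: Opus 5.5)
The plan is to prove the three items from the axioms of a finite probability space, since that is the only setting used later. Here the sample space is a finite monoid, such as $\mathcal{PO}_n$ or $\mathcal{POI}_n$, with the uniform measure. $Z$ denotes a random variable such as $Z(\alpha)=|{\dom}\alpha|$. Throughout, the sums over $r$ run over those values with $P(Z=r)>0$, so that every conditional probability is defined. The basic observation is that the events $\{Z=r\}$, for $r$ in the (finite) range of $Z$, are pairwise disjoint and their union is the whole space.

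For (i), I will write the event $\{Y=k\}$ as the disjoint union $\bigcup_r \{Y=k,\,Z=r\}$ and apply finite additivity. This gives $P(Y=k)=\sum_r P(Y=k,\,Z=r)$. By the definition of conditional probability, $P(Y=k,\,Z=r)=P(Y=k\mid Z=r)\,P(Z=r)$ whenever $P(Z=r)>0$. The terms with $P(Z=r)=0$ contribute nothing to the sum, so they may be dropped.

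For (ii), I will recall that $E(Y\mid Z=r)=\sum_k k\,P(Y=k\mid Z=r)$. Starting from $E(Y)=\sum_k k\,P(Y=k)$, I substitute (i) and interchange the two finite sums:
\[
E(Y)=\sum_k k\sum_r P(Y=k\mid Z=r)P(Z=r)=\sum_r\Bigl(\sum_k kP(Y=k\mid Z=r)\Bigr)P(Z=r).
\]
The inner sum is exactly $E(Y\mid Z=r)$, which gives (ii).

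For (iii), I will use the definition $\sigma^2(Y)=E\bigl((Y-E(Y))^2\bigr)$ and expand the square as $Y^2-2E(Y)Y+(E(Y))^2$. Linearity of expectation, together with the fact that $E(c)=c$ for a constant $c$, then gives $E(Y^2)-2(E(Y))^2+(E(Y))^2=E(Y^2)-(E(Y))^2$.

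None of these steps is a real obstacle, because all sums are finite. The only point requiring care is to restrict attention to values $r$ with $P(Z=r)>0$, so that conditional probabilities and conditional expectations are well defined. In the later applications this means excluding values of $|{\dom}\alpha|$ that no transformation under consideration attains.
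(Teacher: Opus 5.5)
Your proof is correct, but it takes a different route from the paper. The paper gives no proof at all. It simply cites Kolmogorov's textbook, where the law of total probability, the law of total expectation and the identity $\sigma^2(Y)=E(Y^2)-(E(Y))^2$ are established for general probability spaces. In that setting (ii) and (iii) need integrability assumptions, and conditional expectation is defined measure-theoretically.

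You instead derive all three items from finite additivity alone: you partition the sample space by the events $\{Z=r\}$ and interchange finite sums. This is self-contained and covers every use the paper makes of the proposition. The sample spaces there are the finite monoids $\mathcal{PO}_n$ and $\mathcal{POI}_n$, so the interchange in (ii) is trivially justified and $E(Y^2)$ is automatically finite. Your argument never actually uses uniformity of the measure; any finite probability space works.

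You also state explicitly that the sums run only over $r$ with $P(Z=r)>0$, which the paper leaves implicit. In the paper's applications this is harmless, because every $W_{\mathcal{PO}_n}^{r}$ ($1\leqslant r\leqslant n$) and every $W_{\mathcal{POI}_n}^{r}$ ($0\leqslant r\leqslant n$) in the summation ranges is nonempty. It is nonetheless the correct hypothesis to record.

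In short, the citation buys full generality at no cost in length. Your derivation buys transparency about exactly which facts the later computations of $P(Y=k)$, $E(Y)$ and $\sigma^2(Y)$ rely on, and all of those facts are elementary in the finite case.
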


\section{The probabilistic properties of $\mathcal{PO}_n$}\label{Section2}

In this section, we study the probability distribution, expectation and variance of $Y_r(\alpha)$ and $Y(\alpha)$ for each $\alpha\in\mathcal{PO}_n$. Let $W_{\mathcal{PO}_n}^{r}=\{\alpha \in \mathcal{PO}_n:|{\dom}\alpha|=r\}$.

\begin{lemma} For natural number $1 \leqslant r \leqslant n$,
\[|W_{\mathcal{PO}_n}^{r}|=\binom{n}{r}\binom{n+r-1}{r}.\]
\end{lemma}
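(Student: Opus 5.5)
The plan is to count $W_{\mathcal{PO}_n}^{r}$ by first choosing the domain and then counting the order-preserving maps out of it. There are $\binom{n}{r}$ choices for a subset $A\subseteq X_n$ with $|A|=r$. Since the order-preserving condition only compares elements of $A$, and $A$ is order-isomorphic to $X_r$ via its increasing enumeration $a_1<a_2<\cdots<a_r$, the order-preserving maps $A\to X_n$ are in bijection with the order-preserving maps $X_r\to X_n$. Hence $|W_{\mathcal{PO}_n}^{r}|=\binom{n}{r}\cdot N$, where $N$ is the number of order-preserving maps $X_r\to X_n$.

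To find $N$, I note that such a map is the same thing as a weakly increasing sequence $1\leqslant b_1\leqslant b_2\leqslant\cdots\leqslant b_r\leqslant n$, where $b_i=a_i\alpha$. The standard shift $c_i=b_i+i-1$ gives a bijection onto strictly increasing sequences $1\leqslant c_1<\cdots<c_r\leqslant n+r-1$, that is, onto $r$-subsets of $X_{n+r-1}$. Therefore $N=\binom{n+r-1}{r}$.

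As a cross-check that also fits the paper's later use of identity (\ref{2}), I can count $N$ by image size $k$ instead. One chooses the image, a $k$-subset of $X_n$, in $\binom{n}{k}$ ways. An order-preserving map onto a fixed $k$-set is determined by cutting the chain $a_1<\cdots<a_r$ into $k$ nonempty consecutive blocks, which can be done in $\binom{r-1}{k-1}$ ways. Summing over $k$ and applying (\ref{2}) again gives $\binom{n+r-1}{r}$.

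There is no real obstacle here. The only point that needs care is that order preservation for a partial map depends solely on the induced order on its domain, which justifies the reduction to maps $X_r\to X_n$. Multiplying the two counts gives $|W_{\mathcal{PO}_n}^{r}|=\binom{n}{r}\binom{n+r-1}{r}$.
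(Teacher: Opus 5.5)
Your proof is correct. The domain step is the same as in the paper: the factor $\binom{n}{r}$, then reducing to weakly increasing sequences $b_1\leqslant\cdots\leqslant b_r$ in $X_n$. Your main count of those sequences takes a different route.

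The paper does not count them directly. It splits by image size $k$ and counts $\binom{n}{k}\binom{r-1}{k-1}$ maps via the image and the set $P_\alpha$ of greatest preimages. That is exactly your cutting of the chain into $k$ consecutive nonempty blocks. It then sums over $k$ using identity (\ref{2}). In other words, your ``cross-check'' is the paper's proof.

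Your shift $c_i=b_i+i-1$ gives $\binom{n+r-1}{r}$ in one step, with no appeal to Vandermonde's convolution. Putting your two counts side by side also gives a bijective proof of (\ref{2}).

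What the paper's longer route buys is the refined count $|\alpha(r,k)|=\binom{n}{r}\binom{n}{k}\binom{r-1}{k-1}$. This is the real input to Theorem \ref{P(Y_r(PO_n))} and to the hypergeometric law $Y_r\sim H(n+r-1,n,r)$. The stars-and-bars bijection alone does not produce this refinement. For the rest of the section you would need to keep your second count as part of the argument, not treat it as a mere check.
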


\begin{proof}
Let $S_r=\{m_1, \ldots, m_r\}$ with $1 \leqslant m_1<\cdots<m_r \leqslant n$. For each $\alpha \in W_{\mathcal{PO}_n}^{r}$, $\alpha$ is determined by the choice of $x_1, \ldots, x_r \in X_n$ such that $m_i \alpha=x_i$ for $1 \leqslant i \leqslant r$, where $1 \leqslant x_1 \leqslant \cdots \leqslant x_r \leqslant n$. Once ${\dom}\alpha$ is chosen, ${\im}\alpha$ is a subset of $X_n$ with $|{\im}\alpha|=k \leqslant r$. Suppose that ${\im}\alpha=\{i_1, \ldots, i_k\}$. Then let $P_\alpha=\{p_1, \ldots, p_k\}$ be the subset of $S_r$ such that $p_j$ is the greatest member of $S_r$ with $p_j \alpha=i_j$ for each $1 \leqslant j \leqslant k$. Since $\alpha$ is order-preserving, the number of choices for $P_\alpha$ and ${\im}\alpha$ are $\binom{r-1}{k-1}$ and $\binom{n}{k}$, respectively. Note that $\alpha$ is determined by the choice of the pair $({\im}\alpha, P_\alpha)$. Thus the number of $\alpha$ with $|{\im}\alpha|=k$ is $\binom{n}{k}\binom{r-1}{k-1}$ for $1 \leqslant k \leqslant r \leqslant n$.

Denote $\alpha\in\mathcal{PO}_n$ with $|{\dom}\alpha|=r$ and $|{\im}\alpha|=k$ by $\alpha(r,k)$. Since the number of choices for $S_r$ is $\binom{n}{r}$,
\[|\alpha(r,k)|=\binom{n}{r} \binom{n}{k} \binom{r-1}{k-1}.\]
Hence
\begin{align*}
|W_{\mathcal{PO}_n}^{r}| =&\sum_{k=1}^r|\alpha(r,k)|=\sum_{k=1}^r \binom{n}{r} \binom{n}{k}\binom{r-1}{k-1}\\
=&\binom{n}{r} \sum_{k=1}^r \binom{n}{k}\binom{r-1}{k-1}\stackrel{(\ref{2})}{=}\binom{n}{r}\binom{n+r-1}{r}.\qedhere
\end{align*}
\end{proof}

For each $\alpha\in\mathcal{PO}_n$, let $Z(\alpha)=|{\dom}\alpha|$ and $P(Y_r(\alpha)=k)$ be the conditional probability function $P(Y(\alpha)= k \mid Z(\alpha)=r)$.

\begin{theorem}\label{P(Y_r(PO_n))}
For each $\alpha\in\mathcal{PO}_n$, the probability function of $Y_r(\alpha)$ is
\[ P(Y_r(\alpha)=k)=\frac{\binom{n}{k}\binom{r-1}{k-1}}{\binom{n+r-1}{r}}\]
where $1 \leqslant k \leqslant r \leqslant n$.
\end{theorem}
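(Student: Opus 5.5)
The plan is to compute the conditional probability directly from the uniform distribution on $\mathcal{PO}_n$, using the counts already obtained in the proof of the preceding lemma. Since every $\alpha\in\mathcal{PO}_n$ is equally likely, for $1\leqslant r\leqslant n$ we have
\[
P(Y_r(\alpha)=k)=P(Y(\alpha)=k\mid Z(\alpha)=r)=\frac{|\{\alpha\in\mathcal{PO}_n: |{\dom}\alpha|=r,\ |{\im}\alpha|=k\}|}{|W_{\mathcal{PO}_n}^{r}|}.
\]
So it suffices to know the numerator and denominator, both of which appear in the proof of the lemma.

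For the numerator, I will use the count $|\alpha(r,k)|=\binom{n}{r}\binom{n}{k}\binom{r-1}{k-1}$ established there. This count rests on the bijection between transformations with fixed domain $S_r$ and image of size $k$ and pairs $({\im}\alpha,P_\alpha)$. The one point I would re-justify is the factor $\binom{r-1}{k-1}$. An order-preserving map from the chain $S_r$ onto a $k$-element chain is the same as a partition of $m_1<\cdots<m_r$ into $k$ nonempty consecutive blocks. Such a partition corresponds to choosing $k-1$ of the $r-1$ gaps between consecutive elements, or equivalently to choosing the block maxima $P_\alpha$, which must contain $m_r$. This gives the factor $\binom{r-1}{k-1}$, and choosing the image gives the factor $\binom{n}{k}$.

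For the denominator, the lemma gives $|W_{\mathcal{PO}_n}^{r}|=\binom{n}{r}\binom{n+r-1}{r}$. Dividing, the common factor $\binom{n}{r}$ cancels and leaves $\binom{n}{k}\binom{r-1}{k-1}/\binom{n+r-1}{r}$, as claimed. As a sanity check, identity (\ref{2}) shows these values sum to $1$ over $1\leqslant k\leqslant r$, so this is a genuine probability function.

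There is no real obstacle. The only delicate step is the correctness of the block-maxima bijection in the lemma's counting, which the argument above confirms.
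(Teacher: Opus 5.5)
Your proposal is correct and follows the paper's proof: you take the conditional probability as the ratio $|\alpha(r,k)|/|W_{\mathcal{PO}_n}^{r}|$ of the counts from the preceding lemma, cancel the common factor $\binom{n}{r}$, and check normalization with identity (\ref{2}). Your justification of the factor $\binom{r-1}{k-1}$, by choosing $k-1$ of the $r-1$ gaps (equivalently, block maxima that must contain $m_r$), makes explicit a step the lemma's proof only states tersely.
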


\begin{proof}
The probability function of $Y_r(\alpha)$ is
\[P(Y_r(\alpha)=k) =\frac{|\alpha(r,k)|}{|W_{\mathcal{PO}_n}^{r}|}= \frac{\binom{n}{r} \binom{n}{k} \binom{r-1}{k-1}}{\binom{n}{r}\binom{n+r-1}{r}}= \frac{\binom{n}{k}\binom{r-1}{k-1}}{\binom{n+r-1}{r}}.\]
Clearly, $P(Y_r(\alpha)=k) \geqslant 0$ for $1 \leqslant k \leqslant r \leqslant n$ and
\[\sum_{k=1}^n P(Y_r(\alpha)=k) =\sum_{k=1}^n \frac{\binom{n}{k}\binom{r-1}{k-1}}{\binom{n+r-1}{r}}= \frac{\sum_{k=1}^r\binom{n}{k} \binom{r-1}{k-1}}{\binom{n+r-1}{r}}\stackrel{(\ref{2})}{=} \frac{\binom{n+r-1}{r}}{\binom{n+r-1}{r}}=1,\]
thus the nonnegativity and the normalization of $P(Y_r(\alpha))$ hold.
\end{proof}

Recall that if the random variable $X$ follows a hypergeometric distribution $H(N,K,t)$, then the probability function ${P}(X=k) = \frac{\binom{K}{k} \binom{N-K}{t-k}}{\binom{N}{t}}$. It follows from Theorem \ref{P(Y_r(PO_n))} that for each $\alpha \in \mathcal{PO}_n$, $Y_r(\alpha)$ follows a hypergeometric distribution if $N=n+r-1$, $K=n$, $t=r$. Hence the following result holds.

\begin{lemma}
For each $\alpha \in \mathcal{PO}_n$, the random variable $Y_r(\alpha)$ follows a hypergeometric distribution for $1 \leqslant r \leqslant n$ and is denoted by $Y_r(\alpha) \sim H(n+r-1,n,r)$.
\end{lemma}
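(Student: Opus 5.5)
The plan is to compare the probability function from Theorem \ref{P(Y_r(PO_n))} term by term with the hypergeometric mass function for $N=n+r-1$, $K=n$, $t=r$. The hypergeometric mass at $k$ is
\[
\frac{\binom{n}{k}\binom{(n+r-1)-n}{r-k}}{\binom{n+r-1}{r}}=\frac{\binom{n}{k}\binom{r-1}{r-k}}{\binom{n+r-1}{r}}.
\]
So everything rests on the symmetry $\binom{r-1}{r-k}=\binom{r-1}{(r-1)-(r-k)}=\binom{r-1}{k-1}$. This holds for all integers $k$ with $1\leqslant k\leqslant r$. With it, the two mass functions agree for every $1\leqslant k\leqslant r$.

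Next I will check that the supports coincide, which is the only point needing care. For $H(n+r-1,n,r)$ the natural support is
\[
\max\{0,\,r-(r-1)\}\leqslant k\leqslant \min\{n,r\},
\]
that is, $1\leqslant k\leqslant r$, using $r\leqslant n$. Outside this range both expressions vanish: $\binom{r-1}{k-1}=0$ for $k=0$ or $k>r$, and $\binom{r-1}{r-k}=0$ likewise. Hence the two functions agree on all integers $k$.

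Both sides sum to $1$: by identity (\ref{2}), equivalently Vandermonde (\ref{1}), as already verified after Theorem \ref{P(Y_r(PO_n))}. So no further normalization argument is required, and we may conclude $Y_r(\alpha)\sim H(n+r-1,n,r)$ for $1\leqslant r\leqslant n$. The case $r=0$ is excluded as in the paper's standing assumption, since there $\alpha$ is the null transformation.

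The main (mild) obstacle is just the support and boundary bookkeeping at $k=0$ and at $r=1$. When $r=1$ we have $N-K=0$, and the distribution degenerates at $k=1$, consistent with $\binom{0}{0}=1$.
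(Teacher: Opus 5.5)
Your proposal is correct and takes essentially the same route as the paper, which reads off $H(n+r-1,n,r)$ directly from the mass function in Theorem~\ref{P(Y_r(PO_n))}. You additionally make explicit the step the paper leaves implicit, namely the symmetry $\binom{r-1}{r-k}=\binom{r-1}{k-1}$, and you check that both mass functions have support $1\leqslant k\leqslant r$.
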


\begin{theorem}\label{E(Y_r(PO_n))}
For each $\alpha \in \mathcal{PO}_n$ and $1 \leqslant r \leqslant n$, the expectation and variance of $Y_r(\alpha)$ are
\[E(Y_r)=\frac{nr}{n+r-1} \quad\text{and}\quad \sigma^2(Y_r)=\frac{nr(n-1)(r-1)}{(n+r-1)^2(n+r-2)}.\]
\end{theorem}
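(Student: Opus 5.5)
The plan is to compute $E(Y_r)$ and $E(Y_r^2)$ directly from the probability function in Theorem \ref{P(Y_r(PO_n))}, using the identities (\ref{2})--(\ref{4}), and then to apply Proposition \ref{probability}(iii). (One could also quote the standard hypergeometric formulas $E=tK/N$ and $\sigma^2=tK(N-K)(N-t)/(N^2(N-1))$ with $N=n+r-1$, $K=n$, $t=r$, but a self-contained computation fits the paper better.)

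\textbf{Expectation.} By (\ref{3}),
\[
E(Y_r)=\frac{\sum_{k=1}^r k\binom{n}{k}\binom{r-1}{k-1}}{\binom{n+r-1}{r}}=\frac{n\binom{n+r-2}{r-1}}{\binom{n+r-1}{r}}.
\]
Since $\binom{n+r-1}{r}=\frac{n+r-1}{r}\binom{n+r-2}{r-1}$, this simplifies to $\frac{nr}{n+r-1}$.

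\textbf{Second moment.} By (\ref{4}),
\[
E(Y_r^2)=\frac{n(n-1)\binom{n+r-3}{r-2}+n\binom{n+r-2}{r-1}}{\binom{n+r-1}{r}}.
\]
Using $\binom{n+r-1}{r}=\frac{(n+r-1)(n+r-2)}{r(r-1)}\binom{n+r-3}{r-2}$ gives
\[
E(Y_r^2)=\frac{n(n-1)r(r-1)}{(n+r-1)(n+r-2)}+\frac{nr}{n+r-1}.
\]
For $r=1$ the first term is zero, consistent with $\binom{n-2}{-1}=0$. I will note this edge case explicitly.

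\textbf{Variance.} By Proposition \ref{probability}(iii),
\[
\sigma^2(Y_r)=\frac{nr}{n+r-1}\left[\frac{(n-1)(r-1)}{n+r-2}+1-\frac{nr}{n+r-1}\right].
\]
The main (purely algebraic) step is simplifying the bracket. First, $1-\frac{nr}{n+r-1}=\frac{(n-1)+r-nr}{n+r-1}=-\frac{(n-1)(r-1)}{n+r-1}$. The bracket therefore equals
\[
(n-1)(r-1)\left[\frac{1}{n+r-2}-\frac{1}{n+r-1}\right]=\frac{(n-1)(r-1)}{(n+r-1)(n+r-2)}.
\]
Multiplying by $\frac{nr}{n+r-1}$ yields the stated formula $\frac{nr(n-1)(r-1)}{(n+r-1)^2(n+r-2)}$.

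A minor point to handle is the case $n=1$, $r=1$, where $n+r-2=0$ in the denominator. I would treat it separately: there $Y_r$ is degenerate and its variance is $0$, which the formula also gives if read as the product form.
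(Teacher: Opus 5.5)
Your proof is correct, but it takes a different route from the paper. The paper computes no moments at all. It identifies $Y_r$ with the hypergeometric law $H(n+r-1,n,r)$ (the lemma just before the theorem) and substitutes $N=n+r-1$, $K=n$, $t=r$ into the textbook formulas $Kt/N$ and $Kt(N-K)(N-t)/(N^2(N-1))$.

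You instead work straight from the probability function of Theorem~\ref{P(Y_r(PO_n))}:
\begin{itemize}
\item identity (\ref{3}) gives the first moment;
\item identity (\ref{4}) gives the second moment;
\item Proposition~\ref{probability}(iii) gives the variance.
\end{itemize}
Your binomial-ratio simplifications check out, as does the factorisation $1-\frac{nr}{n+r-1}=-\frac{(n-1)(r-1)}{n+r-1}$. The paper's route is a one-line citation that lets the hypergeometric structure do the work. Yours is self-contained and avoids any appeal to an external formula. In effect it is the standard derivation of the hypergeometric moments, specialised to this case. It also puts identities (\ref{3}) and (\ref{4}) to work here; the paper establishes them but otherwise only needs them for $E(Y^2)$ in Theorem~\ref{E(Y(PO_n))}.

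One small correction to your final remark. At $n=r=1$ the stated closed form is literally $0/0$, and no rewriting of it ``gives'' $0$. The honest statement is that $Y_1\equiv 1$ (for every $n$), so $\sigma^2(Y_1)=0$ directly from the distribution. The theorem's formula must then be read with that convention when $n=r=1$. The paper's route has the same defect, since $N-1=0$ for $H(1,1,1)$.
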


\begin{proof}
Recall that the expectation and variance of $X$ are $\frac{Kt}{N}$ and $\frac{Kt(N-K)(N-t)}{N^2(N-1)}$ for $X \sim H(N,K,t)$, respectively. Since $Y_r(\alpha) \sim H(n+r-1,n,r)$, it is routine to show that $E(Y_r)=\frac{nr}{n+r-1}$ and $\sigma^2(Y_r) = \frac{nr(n-1)(r-1)}{(n+r-1)^2(n+r-2)}$.
\end{proof}

Next, we can calculate the probability function of $Y(\alpha)$ by $P(Y_r(\alpha)=k)$.

\begin{theorem}\label{P(Y(PO_n))}
For each $\alpha\in\mathcal{PO}_n$, the probability function of $Y(\alpha)$ is
\[P(Y(\alpha)=k)=\frac{1}{S}\binom{n}{k}\sum_{r=1}^n\binom{n}{r}\binom{r-1}{k-1}\]
where $1 \leqslant k \leqslant n$ and $S =\sum_{r=1}^n\binom{n}{r}\binom{n+r-1}{r}$ is only related to $n$.
\end{theorem}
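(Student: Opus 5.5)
The plan is to use the law of total probability from Proposition \ref{probability}(i), conditioning on $Z(\alpha)=|{\dom}\alpha|$. Since the null transformation is excluded, the sample space is the set of nonnull elements of $\mathcal{PO}_n$, i.e.\ the disjoint union $\bigcup_{r=1}^n W_{\mathcal{PO}_n}^{r}$. By the preceding lemma its cardinality is
\[
S=\sum_{r=1}^n\binom{n}{r}\binom{n+r-1}{r},
\]
which depends only on $n$. Assuming the uniform distribution on this set, we get
\[
P(Z(\alpha)=r)=\frac{|W_{\mathcal{PO}_n}^{r}|}{S}=\frac{1}{S}\binom{n}{r}\binom{n+r-1}{r}.
\]

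Next I will substitute Theorem \ref{P(Y_r(PO_n))}, which gives $P(Y(\alpha)=k\mid Z(\alpha)=r)=\binom{n}{k}\binom{r-1}{k-1}/\binom{n+r-1}{r}$ for $1\leqslant k\leqslant r$. For $r<k$ this conditional probability is $0$, and this agrees with the formula because $\binom{r-1}{k-1}=0$ there. So the sum over $r$ may run over all $1\leqslant r\leqslant n$. In each summand the factors $\binom{n+r-1}{r}$ cancel, leaving $\frac{1}{S}\binom{n}{k}\binom{n}{r}\binom{r-1}{k-1}$. Pulling out the factor $\binom{n}{k}$, which does not depend on $r$, gives the stated formula.

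Equivalently, and as a check, one can count directly. The number of $\alpha$ with $|{\im}\alpha|=k$ is $\sum_r|\alpha(r,k)|=\sum_r\binom{n}{r}\binom{n}{k}\binom{r-1}{k-1}$, by the count in the lemma's proof, and dividing by $S$ gives the same result. Finally I will verify normalization: interchange the sums over $k$ and $r$ and apply identity (\ref{2}) to get $\sum_k P(Y=k)=\frac1S\sum_r\binom{n}{r}\binom{n+r-1}{r}=1$.

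No step is a real obstacle. The only points needing care are handling the range $k>r$ via the vanishing binomial coefficient, and noting that the probability space excludes the null map, so that $S$ starts at $r=1$.
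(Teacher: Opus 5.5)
Your proposal is correct and matches the paper's proof: condition on $Z(\alpha)=r$ with $P(Z(\alpha)=r)=|W_{\mathcal{PO}_n}^{r}|/S$, substitute the conditional probabilities from Theorem \ref{P(Y_r(PO_n))} so that $\binom{n+r-1}{r}$ cancels, then check normalization by interchanging the sums and applying identity (\ref{2}). You also note explicitly that $\binom{r-1}{k-1}=0$ for $r<k$, which justifies letting the sum run over all $1\leqslant r\leqslant n$; the paper uses this point without stating it.
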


\begin{proof}
By Proposition \ref{probability}(i), the probability function of $Y(\alpha)$ is
\begin{align*}
P(Y(\alpha)=k) & =\sum_{r=1}^n P(Y_r(\alpha)=k) \cdot P(Z(\alpha)=r) =\sum_{r=1}^n\frac{\binom{n}{k}\binom{r-1}{k-1}}{\binom{n+r-1}{r}} \cdot \frac{|W_{\mathcal{PO}_n}^{r}|}{S}\\
& =\frac{1}{S}\sum_{r=1}^n\frac{\binom{n}{k}\binom{r-1}{k-1}\binom{n}{r}\binom{n+r-1}{r}}{\binom{n+r-1}{r}}=\frac{1}{S}\binom{n}{k}\sum_{r=1}^n\binom{n}{r}\binom{r-1}{k-1}.
\end{align*}
Clearly, $P(Y(\alpha)=k) \geqslant 0$ for $1 \leqslant k \leqslant n$ and
\begin{align*}
\sum_{k=1}^n P(Y(\alpha)=k) &=\sum_{k=1}^n\frac{1}{S}\binom{n}{k} \sum_{r=1}^n\binom{n}{r}\binom{r-1}{k-1}=\frac{1}{S} \sum_{r=1}^n\binom{n}{r} \sum_{k=1}^r\binom{n}{k}\binom{r-1}{k-1}\\
&\stackrel{(\ref{2})}{=}\frac{1}{S} \sum_{r=1}^n\binom{n}{r}\binom{n+r-1}{r}=1,
\end{align*}
thus the nonnegativity and the normalization of $P(Y(\alpha))$ hold.
\end{proof}

\begin{theorem}\label{E(Y(PO_n))}
For each $\alpha \in \mathcal{PO}_n$, the expectation and variance of $Y(\alpha)$ are
\[E(Y)=\frac{n}{S}\sum_{r=1}^n\binom{n}{r}\binom{n+r-2}{r-1},\]
\[\sigma^2(Y)=\frac{n}{S}\sum_{r=1}^n\binom{n}{r}((n-1)\binom{n+r-3}{r-2}+\binom{n+r-2}{r-1})-(\frac{n}{S}\sum_{r=1}^n\binom{n}{r}\binom{n+r-2}{r-1})^2\]
where $S=\sum_{r=1}^n\binom{n}{r}\binom{n+r-1}{r}$ is only related to $n$.
\end{theorem}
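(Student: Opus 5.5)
The expectation comes from conditioning on the domain size, and the variance from the second moment. By Proposition \ref{probability}(ii),
\[
E(Y)=\sum_{r=1}^n E(Y_r)\,P(Z(\alpha)=r),
\]
where $P(Z(\alpha)=r)=|W_{\mathcal{PO}_n}^{r}|/S=\binom{n}{r}\binom{n+r-1}{r}/S$. Substituting $E(Y_r)=\frac{nr}{n+r-1}$ from Theorem \ref{E(Y_r(PO_n))} gives summands of the form $\frac{n}{S}\binom{n}{r}\cdot\frac{r}{n+r-1}\binom{n+r-1}{r}$. The elementary absorption identity $\frac{r}{n+r-1}\binom{n+r-1}{r}=\binom{n+r-2}{r-1}$ then yields the stated formula for $E(Y)$.

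An equivalent route, which also serves as a check, is to compute $E(Y)=\sum_k kP(Y=k)$ directly from Theorem \ref{P(Y(PO_n))}. Interchanging the sums gives
\[
E(Y)=\frac{1}{S}\sum_{r=1}^n\binom{n}{r}\sum_{k=1}^r k\binom{n}{k}\binom{r-1}{k-1},
\]
because $\binom{r-1}{k-1}=0$ for $k>r$. The inner sum equals $n\binom{n+r-2}{r-1}$ by identity (\ref{3}).

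For the variance, I will use Proposition \ref{probability}(iii), $\sigma^2(Y)=E(Y^2)-(E(Y))^2$, so only $E(Y^2)$ is needed. I will compute it the same way:
\[
E(Y^2)=\sum_{k=1}^n k^2P(Y=k)=\frac{1}{S}\sum_{r=1}^n\binom{n}{r}\sum_{k=1}^r k^2\binom{n}{k}\binom{r-1}{k-1}.
\]
Identity (\ref{4}) evaluates the inner sum as $n(n-1)\binom{n+r-3}{r-2}+n\binom{n+r-2}{r-1}$. Factoring out $n$ gives
\[
E(Y^2)=\frac{n}{S}\sum_{r=1}^n\binom{n}{r}\Bigl((n-1)\binom{n+r-3}{r-2}+\binom{n+r-2}{r-1}\Bigr).
\]
Subtracting the square of the expectation formula then gives exactly the stated expression for $\sigma^2(Y)$.

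The only delicate point is bookkeeping at the boundary. For $r=1$ the term $\binom{n+r-3}{r-2}=\binom{n-2}{-1}$ must be read as $0$, and this is consistent with (\ref{4}), since $k(k-1)=0$ for $k=1$. Likewise, the interchange of the $k$ and $r$ summations needs $\binom{r-1}{k-1}$ to vanish for $k>r$. Once these conventions are fixed, everything follows directly from the previously established identities and there is no real obstacle.
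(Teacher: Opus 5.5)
Your proposal is correct and follows the paper's proof essentially step for step. You obtain $E(Y)$ from Proposition \ref{probability}(ii), using $E(Y_r)=\frac{nr}{n+r-1}$ and the absorption identity, and $E(Y^2)$ by summing $k^2P(Y=k)$ from Theorem \ref{P(Y(PO_n))}, interchanging the sums and applying identity (\ref{4}). Your extra cross-check of $E(Y)$ via identity (\ref{3}), and your remark that $\binom{n-2}{-1}=0$ at $r=1$, are sound details the paper leaves implicit.
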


\begin{proof}
By Proposition \ref{probability}(ii) and Theorem \ref{E(Y_r(PO_n))}, the expectation of $Y(\alpha)$ is
\begin{align*}
E(Y) & =\sum_{r=1}^n E(Y_r) \cdot P(Z(\alpha)=r) =\sum_{r=1}^n \frac{nr}{n+r-1} \cdot \frac{|W_{\mathcal{PO}_n}^{r}|}{S} \\
& =\frac{n}{S} \sum_{r=1}^n\binom{n}{r}\binom{n+r-1}{r} \frac{r}{n+r-1} =\frac{n}{S} \sum_{r=1}^n\binom{n}{r} \frac{(n+r-1)!r}{r!(n-1)!(n+r-1)} \\
& =\frac{n}{S} \sum_{r=1}^n\binom{n}{r} \frac{(n+r-2)!}{(r-1)!(n-1)!} =\frac{n}{S} \sum_{r=1}^n\binom{n}{r}\binom{n+r-2}{r-1}.
\end{align*}
By Theorem \ref{P(Y(PO_n))}, the expectation of $Y^2(\alpha)$ is
\begin{align*}
E(Y^2) & \;=\; \sum_{k=1}^n k^2 \cdot P(Y=k) =\sum_{k=1}^n k^2 \cdot \frac{1}{S}\binom{n}{k}\sum_{r=1}^n\binom{n}{r}\binom{r-1}{k-1}\\
& \;=\; \frac{1}{S} \sum_{r=1}^n\binom{n}{r}\sum_{k=1}^r k^2 \binom{n}{k} \binom{r-1}{k-1} \\
&\; \stackrel{(\ref{4})}{=}\; \frac{1}{S} \sum_{r=1}^n\binom{n}{r}(n(n-1)\binom{n+r-3}{r-2}+n\binom{n+r-2}{r-1})\\
& \; =\;\frac{n}{S}\sum_{r=1}^n\binom{n}{r}((n-1)\binom{n+r-3}{r-2}+\binom{n+r-2}{r-1}).
\end{align*}
Hence by Proposition \ref{probability}(iii), the variance of $Y(\alpha)$ is
\begin{align*}
\;&\sigma^2(Y)
=E(Y^2)-(E(Y))^2\\
=\;&\frac{n}{S}\sum_{r=1}^n\binom{n}{r}((n-1)\binom{n+r-3}{r-2}+\binom{n+r-2}{r-1})-(\frac{n}{S}\sum_{r=1}^n\binom{n}{r}\binom{n+r-2}{r-1})^2.\qedhere
\end{align*}
\end{proof}

\begin{remark}
In Higgins{\rm\cite{higgins1993}}, it was shown that the expectation and variance of $Y_n(\alpha)$ for each $\alpha \in \mathcal{O}_n$ are $E(Y_n)=\frac{n^2}{2n-1}$ and $\sigma^2(Y_n)=\frac{(n-1)n^2}{2(2n-1)^2}$. Clearly these results can be obtained by Theorem \ref{E(Y_r(PO_n))} directly. Further, by Theorem \ref{E(Y(PO_n))} we get the expectation and variance of $Y(\alpha)$ for each $\alpha \in \mathcal{O}_n$ are $E(Y)=\frac{n^2}{2n-1}$ and $\sigma^2(Y)=\frac{(n-1)n^2}{2(2n-1)^2}$.
\end{remark}

\section{The probabilistic properties of $\mathcal{POI}_n$}\label{Section3}
In this section, we study the probability distribution, expectation and variance of $Y_r(\alpha)$ and $Y(\alpha)$ for each $\alpha\in\mathcal{POI}_n$. Let $W_{\mathcal{POI}_n}^{r}=\{\alpha \in \mathcal{POI}_n:|{\dom}\alpha|=r\}$.

\begin{lemma}For natural number $1 \leqslant r \leqslant n$,
\[|W_{\mathcal{POI}_n}^{r}|={\binom{n}{r}}^2.\]
\end{lemma}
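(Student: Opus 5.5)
The plan is to build a bijection between $W_{\mathcal{POI}_n}^{r}$ and the set of ordered pairs $(A,B)$ of $r$-element subsets of $X_n$. The map sends $\alpha$ to $({\dom}\alpha,{\im}\alpha)$, and the count $\binom{n}{r}^2$ follows at once.

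First, let $\alpha\in W_{\mathcal{POI}_n}^{r}$. Since $\alpha$ is injective, $|{\im}\alpha|=|{\dom}\alpha|=r$, so the map $\alpha\mapsto({\dom}\alpha,{\im}\alpha)$ lands in pairs of $r$-subsets of $X_n$.

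Second, I would show this map is a bijection. Given $r$-subsets $A=\{a_1<\cdots<a_r\}$ and $B=\{b_1<\cdots<b_r\}$, define $\alpha$ by $a_i\alpha=b_i$ for $1\leqslant i\leqslant r$. This $\alpha$ is injective and order-preserving, so the map is surjective. For injectivity, take any $\alpha\in\mathcal{POI}_n$ with ${\dom}\alpha=A$ and ${\im}\alpha=B$. Order-preservation gives $a_1\alpha\leqslant\cdots\leqslant a_r\alpha$, and injectivity makes these inequalities strict. Hence the $r$ distinct values $a_i\alpha$ are exactly $B$ listed in increasing order, which forces $a_i\alpha=b_i$. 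So $\alpha$ is uniquely determined by the pair.

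Finally, there are $\binom{n}{r}$ choices for $A$ and, independently, $\binom{n}{r}$ choices for $B$, which gives $|W_{\mathcal{POI}_n}^{r}|=\binom{n}{r}^2$. As an alternative I could specialize the proof of the lemma on $|W_{\mathcal{PO}_n}^{r}|$ to $k=r$, where $\binom{r-1}{r-1}=1$. That route yields the same count $\binom{n}{r}\binom{n}{r}$, because an injective order-preserving map is exactly a $\mathcal{PO}_n$ element with $|{\im}\alpha|=r$.

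The only step with real content is the uniqueness argument, where strict monotonicity pins down the map; everything else is immediate.
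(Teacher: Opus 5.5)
Your proof is correct and is essentially the paper's argument. The paper fixes the domain $S_r$ ($\binom{n}{r}$ choices), observes that an injective order-preserving $\alpha$ on it is determined by its $r$-element image ($\binom{n}{r}$ choices), and multiplies, which is your pairing $\alpha\mapsto({\dom}\alpha,{\im}\alpha)$; you simply spell out the existence and the strict-monotonicity uniqueness step more explicitly than the paper does.
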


\begin{proof}
Let $S_r=\{m_1, \ldots, m_r\}$ with $1 \leqslant m_1<\cdots<m_r \leqslant n$. For each $\alpha \in W_{\mathcal{POI}_n}^{r}$, $\alpha$ is determined by the choice of $x_1, \ldots, x_r \in X_n$ such that $m_i \alpha=x_i$ for $1 \leqslant x_1 < \cdots < x_r \leqslant n$, $1 \leqslant i \leqslant r$. Once ${\dom}\alpha$ is chosen, ${\im}\alpha$ is a subset of $X_n$ with $|{\im}\alpha|=k=r$. Since $\alpha$ is injective order-preserving, the number of choices for ${\im}\alpha$ is $\binom{n}{r}$. Note that $\alpha$ is determined by the choice of ${\im}\alpha$. Thus the number of $\alpha$ with $|{\im}\alpha|=k$ is $\binom{n}{r}$ for $k = r$.

Denote $\alpha\in\mathcal{POI}_n$ with $|{\dom}\alpha|=r$ and $|{\im}\alpha|=k$ by $\alpha(r,k)$. Since the number of choices for $S_r$ is $\binom{n}{r}$,
\[
|\alpha(r,k)|=
\begin{cases}
{\binom{n}{r}}^2, &\text{if}\;\;1 \leqslant k = r \leqslant n,\\
0, &\text{otherwise}.
\end{cases}
\]
Hence
\[|W_{\mathcal{POI}_n}^{r}|=\sum_{k=1}^r|\alpha(r,k)|
=\sum_{k=r}^r \binom{n}{r} \binom{n}{r} ={\binom{n}{r}}^2.\qedhere\]
\end{proof}

For each $\alpha\in\mathcal{POI}_n$, let $Z(\alpha)=|{\dom}\alpha|$ and $P(Y_r(\alpha)=k)$ be the conditional probability function $P(Y(\alpha)= k \mid Z(\alpha)=r)$.

\begin{theorem}\label{P(Y_r(POI_n))}
For each $\alpha \in \mathcal{POI}_n$, the probability function of $Y_r(\alpha)$ is
\[P(Y_r(\alpha)=k)=
\begin{cases}
1, &\text{if}\;\;0 \leqslant k = r \leqslant n,\\
0, &\text{otherwise}.
\end{cases}\]
\end{theorem}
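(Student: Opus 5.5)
The plan mirrors the proof of Theorem \ref{P(Y_r(PO_n))}: compute the conditional probability as the ratio $|\alpha(r,k)|/|W_{\mathcal{POI}_n}^{r}|$ and read the values off the preceding lemma. That lemma gives $|\alpha(r,k)|=\binom{n}{r}^2$ when $1\leqslant k=r\leqslant n$ and $|\alpha(r,k)|=0$ otherwise. It also gives $|W_{\mathcal{POI}_n}^{r}|=\binom{n}{r}^2$ for $1\leqslant r\leqslant n$.

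\textbf{Case $1\leqslant r\leqslant n$.} Since $\binom{n}{r}^2\neq 0$, the conditional probability is well defined, and
\[
P(Y_r(\alpha)=k)=\frac{|\alpha(r,k)|}{|W_{\mathcal{POI}_n}^{r}|},
\]
which equals $1$ when $k=r$ and $0$ otherwise. The structural reason is that every $\alpha\in\mathcal{POI}_n$ is injective, so $|{\im}\alpha|=|{\dom}\alpha|$; the lemma's count simply records this fact.

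\textbf{Case $r=0$.} This case is not covered by the lemma, so it needs a separate remark. The only element with $|{\dom}\alpha|=0$ is the empty (null) transformation, whose image is empty. Hence $W_{\mathcal{POI}_n}^{0}$ has exactly one element, $Y_0=0$ with probability $1$, and this matches $k=r=0$. This is consistent with the introduction's remark that $Y_r$ degenerates at $0$ for the null transformation.

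\textbf{Checks.} Nonnegativity is clear. Normalization holds because
\[
\sum_{k}P(Y_r(\alpha)=k)=P(Y_r(\alpha)=r)=1.
\]
The only point needing care is the boundary case $r=0$, and it is handled by the direct observation above rather than by the lemma.
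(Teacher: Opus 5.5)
Your proposal is correct and follows the paper's proof essentially step for step. For $1\leqslant r\leqslant n$ you take the ratio $|\alpha(r,k)|/|W_{\mathcal{POI}_n}^{r}|$ from the preceding lemma, you handle $r=0$ separately by noting that the null transformation is the only element of $W_{\mathcal{POI}_n}^{0}$, and you finish with the same nonnegativity and normalization checks as the paper.
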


\begin{proof}
The probability function of $Y_r(\alpha)$ is
\[P(Y_r(\alpha)=k)=\frac{|\alpha(r,k)|}{|W_{\mathcal{POI}_n}^{r}|}=
\begin{cases}
1,&\text{if}\;\; 1 \leqslant k = r \leqslant n,\\
0,&\text{otherwise}.
\end{cases}\]
If $r=0$, $|W_{\mathcal{POI}_n}^{r}|$ consists of only the null transformation and $Y_r(\alpha)$ degenerates at $0$. Hence $P(Y_r(\alpha)=k)=1$ for $0 \leqslant k = r \leqslant n$.
Clearly, $P(Y_r(\alpha)=k) \geqslant 0$ for $0 \leqslant k \leqslant r \leqslant n$ and
\[\sum_{k=0}^n P(Y_r(\alpha)=k) =\sum_{k=r} P(Y_r(\alpha)=k)=1,\] thus the nonnegativity and the normalization of $P(Y_r(\alpha))$ hold.
\end{proof}

Recall that if the random variable $X$ degenerates at $t$, then the probability function
${P}(X=k) =
\begin{cases}
1,&\text{if}\;\; k = t\\
0,&\text{otherwise}
\end{cases}$. It follows from Theorem \ref{P(Y_r(POI_n))} that for each $\alpha \in \mathcal{POI}_n$, $Y_r(\alpha)$ degenerates at $r$ for $0 \leqslant r \leqslant n$. Hence the following result holds.

\begin{lemma}
For each $\alpha \in \mathcal{POI}_n$, the random variable $Y_r(\alpha)$ follows a degenerate distribution for $0 \leqslant r \leqslant n$ and is denoted by $Y_r(\alpha) \sim \delta (r)$.
\end{lemma}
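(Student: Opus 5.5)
This lemma is essentially a restatement of Theorem \ref{P(Y_r(POI_n))}, so the plan is to compare that probability function with the definition of the degenerate distribution $\delta(r)$. Fix $r$ with $0 \leqslant r \leqslant n$ and look at the conditional law of $Y(\alpha)$ given $Z(\alpha)=r$, with $\alpha$ uniform in $W_{\mathcal{POI}_n}^{r}$.

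The argument splits into two cases.

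\textbf{Case $1 \leqslant r \leqslant n$.} By the preceding lemma on $|W_{\mathcal{POI}_n}^{r}|$, every $\alpha$ with $|{\dom}\alpha|=r$ is injective. Hence $|{\im}\alpha|=|{\dom}\alpha|=r$. Theorem \ref{P(Y_r(POI_n))} records exactly this as $P(Y_r(\alpha)=r)=1$ and $P(Y_r(\alpha)=k)=0$ for $k\neq r$.

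\textbf{Case $r=0$.} The set $W_{\mathcal{POI}_n}^{0}$ contains only the empty (null) transformation, whose image is empty. So $P(Y_0=0)=1$, which is also covered by Theorem \ref{P(Y_r(POI_n))}.

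In both cases the probability function of $Y_r(\alpha)$ coincides with that of a random variable degenerate at $t=r$. Therefore $Y_r(\alpha)\sim\delta(r)$.

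There is no real obstacle here. The one point to handle carefully is the case $r=0$, which the earlier counting lemma (stated for $1\leqslant r\leqslant n$) does not cover; I would treat it separately as above. As a sanity check I would note that this gives $E(Y_r)=r$ and $\sigma^2(Y_r)=0$.
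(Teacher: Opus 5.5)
Your proposal is correct and follows the paper's argument: the paper likewise reads off the probability function from Theorem \ref{P(Y_r(POI_n))}, whose proof already handles $r=0$ via the null transformation, and matches it with the definition of $\delta(r)$. One small quibble: $|{\im}\alpha|=|{\dom}\alpha|$ follows from injectivity, which is part of the definition of $\mathcal{POI}_n$, not a consequence of the counting lemma.
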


\begin{theorem}
For each $\alpha \in \mathcal{POI}_n$, the expectation and variance of $Y_r(\alpha)$ are $E(Y_r)=r$ and $\sigma^2(Y_r)=0$ respectively, where $0 \leqslant r \leqslant n$.
\end{theorem}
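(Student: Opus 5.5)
The plan is to read off the expectation and variance directly from the degenerate distribution given by the preceding lemma, namely $Y_r(\alpha)\sim\delta(r)$ for $0\leqslant r\leqslant n$. Everything needed is already in Theorem \ref{P(Y_r(POI_n))}, which gives $P(Y_r(\alpha)=k)=1$ if $k=r$ and $0$ otherwise. The argument therefore splits into two short computations plus a check of the boundary case $r=0$.

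For the expectation, I will use the definition $E(Y_r)=\sum_{k=0}^n k\,P(Y_r(\alpha)=k)$. Only the term $k=r$ is nonzero, so $E(Y_r)=r\cdot 1=r$.

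For the variance, I will invoke Proposition \ref{probability}(iii), $\sigma^2(Y_r)=E(Y_r^2)-(E(Y_r))^2$. The same one-term collapse gives $E(Y_r^2)=\sum_{k}k^2P(Y_r(\alpha)=k)=r^2$. Hence $\sigma^2(Y_r)=r^2-r^2=0$.

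The only point needing care is $r=0$. Here $W_{\mathcal{POI}_n}^{0}$ consists solely of the null transformation, which the proof of Theorem \ref{P(Y_r(POI_n))} treats separately. That theorem already states that $Y_0$ degenerates at $0$, so the two computations above apply verbatim and give $E(Y_0)=0$ and $\sigma^2(Y_0)=0$.
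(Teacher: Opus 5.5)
Your proposal is correct and takes essentially the paper's approach: both read off $E(Y_r)=r$ and $\sigma^2(Y_r)=0$ from $Y_r(\alpha)\sim\delta(r)$. The only difference is that you verify the mean and variance of the degenerate distribution directly, by collapsing the sums to the single term $k=r$ and using Proposition~\ref{probability}(iii), where the paper quotes them as known; your treatment of $r=0$ matches the paper's handling of the null transformation in Theorem~\ref{P(Y_r(POI_n))}.
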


\begin{proof}
Recall the expectation and variance of $X$ are $t$ and $0$ for $X \sim \delta(t)$, respectively. Since $Y_r(\alpha) \sim \delta(r)$, it is routine to show that $E(Y_r)=r$ and $\sigma^2(Y_r) = 0$.
\end{proof}

Next, we can calculate the probability function of $Y(\alpha)$ by $P(Y_r(\alpha)=k)$.

\begin{theorem}\label{P(Y(POI_n))}
The probability function of $Y(\alpha)$ for each $\alpha \in \mathcal{POI}_n$ is
\[P(Y(\alpha)=k)=\frac{\binom{n}{k}^2}{\binom{2n}{n}}\]
where $0 \leqslant k \leqslant n$.
\end{theorem}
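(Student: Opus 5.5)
We follow the same pattern as in the proof of Theorem \ref{P(Y(PO_n))}, using Proposition \ref{probability}(i) with the conditioning variable $Z(\alpha)=|{\dom}\alpha|$. The first step is to find the total number of elements of $\mathcal{POI}_n$, which serves as the denominator for $P(Z(\alpha)=r)$. This time we include the null transformation as the case $r=0$, since the statement allows $k=0$; note that $\binom{n}{0}^2=1$ counts it correctly. By the preceding lemma, $|W_{\mathcal{POI}_n}^{r}|=\binom{n}{r}^2$ for $1\leqslant r\leqslant n$, so
\[
|\mathcal{POI}_n|=\sum_{r=0}^n \binom{n}{r}^2=\sum_{r=0}^n\binom{n}{r}\binom{n}{n-r}\stackrel{(\ref{1})}{=}\binom{2n}{n},
\]
by Vandermonde's identity (\ref{1}) with $a=b=r=n$. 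This agrees with Garba's count $|\mathcal{POI}_n|=\binom{2n}{n}$. Hence $P(Z(\alpha)=r)=\binom{n}{r}^2/\binom{2n}{n}$ for $0\leqslant r\leqslant n$.

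Next, we apply Proposition \ref{probability}(i):
\[
P(Y(\alpha)=k)=\sum_{r=0}^n P(Y_r(\alpha)=k)\cdot P(Z(\alpha)=r).
\]
By Theorem \ref{P(Y_r(POI_n))}, $P(Y_r(\alpha)=k)$ equals $1$ if $r=k$ and $0$ otherwise. The sum therefore collapses to the single term $r=k$, giving $\binom{n}{k}^2/\binom{2n}{n}$ for $0\leqslant k\leqslant n$.

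Finally, we check nonnegativity and normalization, as in the earlier theorems. Nonnegativity is clear. Normalization is exactly the Vandermonde computation above:
\[
\sum_{k=0}^n \frac{\binom{n}{k}^2}{\binom{2n}{n}}=1.
\]
We also observe that $\binom{n}{k}^2=\binom{n}{k}\binom{2n-n}{n-k}$, so $Y(\alpha)\sim H(2n,n,n)$, which is the form stated in the abstract.

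The only point requiring care is the treatment of the case $r=0$ (equivalently $k=0$). Earlier in the paper the null transformation was excluded, but here it must be included to obtain the denominator $\binom{2n}{n}$ and the $k=0$ term. We handle this by noting explicitly that Theorem \ref{P(Y_r(POI_n))} already covers $r=0$, and that $W_{\mathcal{POI}_n}^{0}$ consists of the single null transformation, with $\binom{n}{0}^2=1$.
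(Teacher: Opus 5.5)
Your proposal is correct and follows essentially the same route as the paper: condition on $Z(\alpha)$, use the degeneracy of $Y_r(\alpha)$ to collapse the sum to $P(Z(\alpha)=k)=\binom{n}{k}^2/\sum_{r=0}^n\binom{n}{r}^2$, and evaluate the denominator as $\binom{2n}{n}$ by Vandermonde's identity. You also explicitly include the $r=0$ term (the null transformation, with $\binom{n}{0}^2=1$), which the lemma on $|W_{\mathcal{POI}_n}^{r}|$ only states for $r\geqslant 1$; this makes precise a step the paper uses implicitly.
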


\begin{proof}
By Proposition \ref{probability}(i), the probability function of $Y(\alpha)$ is
\[\begin{aligned}
P(Y(\alpha)=k) & =\sum_{r=0}^n P(Y_r(\alpha)=k) \cdot P(Z(\alpha)=r)=P(Z(\alpha)=k)\\
&=\frac{|W_{\mathcal{POI}_n}^k|}{\sum_{r=0}^n|W_{\mathcal{POI}_n}^r|} =\frac{\binom{n}{k}^2}{\sum_{r=0}^n \binom{n}{r}^2}
=\frac{\binom{n}{k}^2}{\binom{2n}{n}}.
\end{aligned}\]
Clearly, $P(Y(\alpha)=k) \geqslant 0$ for $0 \leqslant k \leqslant n$ and
\[\sum_{k=0}^n P(Y(\alpha)=k)=\sum_{k=0}^n \frac{\binom{n}{k}^2}{\binom{2n}{n}}=\frac{1}{\binom{2n}{n}}\sum_{k=0}^n\binom{n}{k}^2=1,\]
thus the nonnegativity and the normalization of $P(Y(\alpha))$ hold.
\end{proof}

Recall that if the random variable $X$ follows a hypergeometric distribution $H(N,K,t)$, then the probability function ${P}(X=k) = \frac{\binom{K}{k} \binom{N-K}{t-k}}{\binom{N}{t}}$. It follows from Theorem \ref{P(Y(POI_n))} that for each $\alpha \in \mathcal{POI}_n$, $Y(\alpha)$ follows a hypergeometric distribution if $N=2n$, $K=n$, $t=n$. Hence the following result holds.

\begin{lemma}
For each $\alpha \in \mathcal{POI}_n$, the random variable $Y(\alpha)$ follows a hypergeometric distribution and is denoted by $Y(\alpha) \sim H(2n,n,n)$.
\end{lemma}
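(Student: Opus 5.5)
This lemma follows by matching the formula of Theorem \ref{P(Y(POI_n))} against the hypergeometric probability function recalled just above, in the same way the earlier lemma for $Y_r(\alpha)$ on $\mathcal{PO}_n$ was obtained from Theorem \ref{P(Y_r(PO_n))}. Concretely, I would set $N=2n$, $K=n$, $t=n$ in the hypergeometric probability function, so that $N-K=n$ and $t-k=n-k$. This gives
\[
\frac{\binom{K}{k}\binom{N-K}{t-k}}{\binom{N}{t}}=\frac{\binom{n}{k}\binom{n}{n-k}}{\binom{2n}{n}}.
\]

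The only step that needs any thought is identifying this with the expression in Theorem \ref{P(Y(POI_n))}. By the symmetry $\binom{n}{n-k}=\binom{n}{k}$, the numerator becomes $\binom{n}{k}^2$, which is exactly the formula there.

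It remains to check that the supports agree. The hypergeometric variable $H(2n,n,n)$ takes values $k$ with $\max(0,t-(N-K))\leqslant k\leqslant \min(K,t)$, i.e. $0\leqslant k\leqslant n$. This is precisely the range in Theorem \ref{P(Y(POI_n))}, including $k=0$, which comes from the null transformation (here $r=0$ is allowed, as in Theorem \ref{P(Y_r(POI_n))}). Normalization was already verified in Theorem \ref{P(Y(POI_n))} via $\sum_k\binom{n}{k}^2=\binom{2n}{n}$, which is the case $a=b=r=n$ of (\ref{1}). Hence the two probability functions coincide on the whole support, and $Y(\alpha)\sim H(2n,n,n)$.

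I expect no real obstacle. The only care needed is to include the value $k=0$, which must be counted for the identification with $H(2n,n,n)$ to hold exactly.
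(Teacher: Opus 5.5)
Your proposal is correct and matches the paper's argument, which likewise obtains the lemma by reading off $N=2n$, $K=n$, $t=n$ from the formula $\binom{n}{k}^2/\binom{2n}{n}$ of Theorem \ref{P(Y(POI_n))}. Your explicit use of $\binom{n}{n-k}=\binom{n}{k}$ and your check that the support is $0\leqslant k\leqslant n$, including the null transformation at $k=0$, fill in details the paper leaves implicit.
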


\begin{theorem}
For each $\alpha \in \mathcal{POI}_n$, the expectation and variance of $Y(\alpha)$ are
\[E(Y)=\frac{n}{2} \quad\text{and}\quad \sigma^2(Y)=\frac{n^2}{4(2n-1)}.\]
\end{theorem}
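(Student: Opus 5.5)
We use the preceding lemma, which gives $Y(\alpha)\sim H(2n,n,n)$. We then apply the standard formulas for the hypergeometric distribution that were already used in the proof of Theorem \ref{E(Y_r(PO_n))}: if $X\sim H(N,K,t)$, then
\[
E(X)=\frac{Kt}{N}, \qquad \sigma^2(X)=\frac{Kt(N-K)(N-t)}{N^2(N-1)}.
\]
We substitute $N=2n$, $K=n$, $t=n$.

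For the expectation this gives $E(Y)=\frac{n\cdot n}{2n}=\frac{n}{2}$.

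For the variance, the numerator is $n\cdot n\cdot(2n-n)(2n-n)=n^4$ and the denominator is $(2n)^2(2n-1)=4n^2(2n-1)$. Hence
\[
\sigma^2(Y)=\frac{n^4}{4n^2(2n-1)}=\frac{n^2}{4(2n-1)}.
\]
This needs $n\geqslant 1$, so that $N-1=2n-1\neq 0$, which holds throughout.

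As an independent check that avoids quoting the hypergeometric moments, we can compute directly from Theorem \ref{P(Y(POI_n))}. Using $k\binom{n}{k}=n\binom{n-1}{k-1}$ and Vandermonde's identity (\ref{1}),
\[
\sum_{k}k\binom{n}{k}^2=n\sum_k\binom{n-1}{k-1}\binom{n}{n-k}=n\binom{2n-1}{n-1}=\frac{n}{2}\binom{2n}{n}.
\]
This gives $E(Y)=n/2$ again. Similarly,
\[
\sum_k k(k-1)\binom{n}{k}^2=n(n-1)\binom{2n-2}{n-2}.
\]
Combining this with the first moment gives $E(Y^2)$, and Proposition \ref{probability}(iii) then gives the variance. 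The only step requiring care is simplifying $\binom{2n-2}{n-2}/\binom{2n}{n}=\frac{n(n-1)}{2n(2n-1)}$ and collecting terms. This yields
\[
E(Y^2)=\frac{n(n-1)^2}{2(2n-1)}+\frac{n}{2}=\frac{n^3}{2(2n-1)}, \qquad \sigma^2(Y)=\frac{n^3}{2(2n-1)}-\frac{n^2}{4}=\frac{n^2}{4(2n-1)}.
\]
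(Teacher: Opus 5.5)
Your proposal is correct and matches the paper's proof: it invokes the preceding lemma $Y(\alpha)\sim H(2n,n,n)$ and substitutes $N=2n$, $K=n$, $t=n$ into the standard hypergeometric mean and variance formulas. Your direct moment computation from Theorem~\ref{P(Y(POI_n))} via Vandermonde's identity is also correct. It is a valid self-contained check, which the paper does not include.
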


\begin{proof}
Recall that the expectation and variance of $X$ are $\frac{Kt}{N}$ and $\frac{Kt(N-K)(N-t)}{N^2(N-1)}$ for $X \sim H(N,K,t)$, respectively. Since $Y(\alpha) \sim H(2n,n,n)$, it is routine to show that $E(Y)=\frac{n}{2}$ and $\sigma^2(Y)=\frac{n^2}{4(2n-1)}$.
\end{proof}


\end{document}